 \newcommand{\mb}{\mathbb}
 \newcommand{\ol}{\overline}
\newcommand{\ra}{\rightarrow} 
\newcommand{\iso}{\approx} 
\newcommand{\Gal}{\text{Gal}} 
 \newcommand{\tr}{\text{tr}}
\newcommand{\Z}{\mb{Z}} 
\newcommand{\F}{\mb{F}} 
\newcommand{\Q}{\mb{Q}} 
 \newcommand{\bs}{\backslash}
\newcommand{\<}{\langle} \renewcommand{\>}{\rangle}
 \newcommand{\vare}{\varepsilon}
 \newcommand{\Cl}{\text{Cl}}
 \renewcommand{\l}{\ell}
\begin{document}
\newtheorem{Theo}{Theorem}[section]
\newtheorem*{Ques}{Question}
\newtheorem{Prop}[Theo]{Proposition}

\newtheorem*{Lemma}{Lemma}
\newtheorem{Cor}[Theo]{Corollary}
\newtheorem{Conj}[Theo]{Conjecture}
\setcounter{tocdepth}{1}
\theoremstyle{definition}
\newtheorem{Exam}[Theo]{Example}
\newtheorem*{Remark}{Remark}
\newtheorem*{RemDef}{Remark/Definition}
\newtheorem{Defn}[Theo]{Definition}
\title{$p$-Tower Groups over Quadratic Imaginary Number Fields}
\author{Cam McLeman}
\thanks{Published as \emph{$p$-Tower Groups over Quadratic Imaginary Number Fields.}  Ann. Sci. Math. Québec  32  (2008),  no. 2, 199--209.}
\address{Willamette University, 900 State Street, Collins Science Center.  Salem, OR.  97304}
\email{cmcleman@willamette.edu}

\begin{abstract}
The modern theory of class field towers has its origins in the study
of the $p$-class field tower over a quadratic imaginary number field,
so it is fitting that this problem be the first in the discipline to
be nearing a solution.  We survey the state of the subject and present
a new cohomological condition for a quadratic imaginary number field
to have an infinite $p$-class field tower (for $p$ odd).  Under an
additional hypothesis, we refine this to a necessary and sufficient
condition and describe an algorithm for evaluating this condition for
a given quadratic imaginary number field.
\end{abstract}
\maketitle 
\section{Introduction}
Let $K$ be number field and $p$ a prime.  Set $K_p^{(0)}:=K$ and for
each $i\geq 1$, let $K_p^{(i)}$ be the Hilbert $p$-class field of
$K_p^{(i-1)}$, i.e., the maximal unramified abelian $p$-extension of
$K_p^{(i-1)}$.  Repeating this construction gives the \emph{Hilbert
$p$-class field tower} (or just \emph{$p$-tower}) over $K$:
\begin{align*}
K=K_p^{(0)}\subset K_p^{(1)}\subset\cdots \subset
K_p^{(i)}\subset\cdots.
\end{align*}
By class field theory, the Galois group of consecutive terms in the
tower is isomorphic to the $p$-part of the class group of the base
field: $\Gal(K_p^{(i)}/K_p^{(i-1)}) \cong \Cl_p(K_p^{(i-1)})$.  The
tower thus stabilizes at the $i$-th step, i.e.,
$K_p^{(i)}=K_p^{(i+1)}=\cdots$, if and only if $\Cl_p(K_p^{(i)})=1$.
If such an $i$ exists, the minimal such one is called the
\emph{$p$-length} $\l_p(K)$ of $K$, and we set $\l_p(K)=\infty$
otherwise.  Let $K_p^{(\infty)}$ be the compositum of all fields in
the $p$-class field tower.  We call $\Gal(K_p^{(\infty)}/K)$ the
\emph{$p$-tower group over $K$}.

\vspace*{.1in}

One arithmetic motivation of class field towers relates to the
question of whether a given number field admits an extension with
certain restrictions on its class number.  For example, the following
well-known lemma addresses the question of whether or not a number
field can be embedded in an extension with class number relatively
prime to a given prime $p$, a subtle condition which famously played a
role in Kummer's proof of the first case of Fermat's Last Theorem for
regular primes.

\begin{Lemma}
Let $K$ be an algebraic number field and $p$ a rational prime.  Then
there is a finite extension of $K$ with class number prime to $p$ if
and only if $\l_p(K)<\infty$.
\end{Lemma}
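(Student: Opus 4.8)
The statement is a biconditional, so the plan is to prove the two implications separately, using throughout the identification of $K_p^{(\infty)}$ with the maximal unramified pro-$p$ extension of $K$ and the interpretation of $\Gal(K_p^{(\infty)}/K)$ as the $p$-tower group. The backward implication is the routine one: if $\l_p(K)=\ell<\infty$, then by definition of the $p$-length the tower stabilizes at $K_p^{(\ell)}$, so $\Cl_p(K_p^{(\ell)})=1$ and hence $p\nmid h_{K_p^{(\ell)}}$. Since $K_p^{(\ell)}/K$ is a finite chain of finite abelian $p$-extensions, it is itself a finite extension of $K$, and it has class number prime to $p$; this is the desired field.

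For the forward implication, suppose $L/K$ is a finite extension with $p\nmid h_L$, equivalently $\Cl_p(L)=1$. First I would isolate an auxiliary claim: \emph{a number field $L$ with $\Cl_p(L)=1$ admits no nontrivial unramified extension of $p$-power degree}. To prove it, let $\Omega/L$ be the maximal unramified pro-$p$ extension of $L$ that is Galois over $L$, and set $\Gamma=\Gal(\Omega/L)$, a pro-$p$ group. If $\Gamma\neq 1$ then its Frattini quotient is nontrivial, so there is an open normal subgroup with quotient $\Z/p$; its fixed field is a nontrivial unramified abelian $p$-extension of $L$, contradicting $\Cl_p(L)=1$ via the class field theory isomorphism $\Gal(L_p^{(1)}/L)\iso\Cl_p(L)$. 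Hence $\Omega=L$, and since the Galois closure of an unramified extension of $p$-power degree is again unramified of $p$-power degree (and so lies in $\Omega$), every such extension of $L$ is trivial.

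The conclusion then follows by composition. Because $K_p^{(\infty)}/K$ is unramified, the base change $L\cdot K_p^{(\infty)}/L$ is again unramified, and restriction of automorphisms embeds $\Gal(L\cdot K_p^{(\infty)}/L)$ into the pro-$p$ group $\Gal(K_p^{(\infty)}/K)$. Thus $L\cdot K_p^{(\infty)}/L$ is an unramified pro-$p$ extension, so by the auxiliary claim it is trivial; that is, $K_p^{(\infty)}\subseteq L$. Therefore $[K_p^{(\infty)}:K]\leq[L:K]<\infty$, the tower is finite, and $\l_p(K)<\infty$.

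The delicate point, and the step I expect to be the main obstacle, is that $L$ need not be unramified over $K$, so one cannot directly relate the $p$-class field towers of $K$ and $L$. The device that circumvents this is base-changing the \emph{already unramified} extension $K_p^{(\infty)}$ up to $L$, which preserves both unramifiedness and the pro-$p$ property; combined with the Frattini-quotient argument showing that $\Cl_p(L)=1$ kills all unramified $p$-extensions of $L$ (not merely the abelian ones), this forces $K_p^{(\infty)}$ into the finite extension $L$.
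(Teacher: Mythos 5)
Your proof is correct, and its overall architecture matches the paper's: the easy direction is identical, and the substantive direction in both cases comes down to showing $K_p^{(\infty)}\subseteq L$. Where you differ is in how that containment is obtained. The paper invokes functoriality of the Hilbert $p$-class field under inclusions ($K\subset L$ implies $K_p^{(1)}\subset L_p^{(1)}$) and inducts up the tower to get $K_p^{(\infty)}\subset L_p^{(\infty)}=L$. You instead base-change the entire tower in one step, observe that $L\cdot K_p^{(\infty)}/L$ is an unramified Galois extension with pro-$p$ group (via restriction into $\Gal(K_p^{(\infty)}/K)$), and kill it with a Frattini-quotient argument from $\Cl_p(L)=1$. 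This is essentially a self-contained proof of the fact the paper cites as a black box, and it buys you slightly more: it makes explicit that $\Cl_p(L)=1$ rules out \emph{all} unramified Galois pro-$p$ extensions of $L$, not just the abelian ones, without the level-by-level induction.

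One caveat: your parenthetical claim that the Galois closure of an unramified extension of $p$-power degree again has $p$-power degree is false in general (a non-Galois degree-$p^k$ extension can have Galois closure of degree divisible by primes other than $p$; think of a quartic field with Galois closure of degree $24$). Consequently your auxiliary claim, as stated for arbitrary extensions of $p$-power degree, is overreaching. This does not damage the proof, because the only extension you apply it to, namely $L\cdot K_p^{(\infty)}/L$, is automatically Galois with pro-$p$ group, which is exactly the case your Frattini argument handles. You should simply restate the auxiliary claim for Galois (pro-)$p$-extensions and drop the Galois-closure remark.
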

\begin{proof}
The first direction is clear: If $\l_p(K)=n<\infty$, then
$K_p^{(n)}=K_p^{(n+1)}$ and so $\Gal(K_p^{(n+1)}/K_p^{(n)})\cong
\Cl_p(K_p^{(n)})$ is trivial.  Thus $K_p^{(n)}$ is a finite extension
of $K$ with class number prime to $p$.  For the converse, we use that
the Hilbert $p$-class field is functorial with respect to inclusions,
i.e., that $K\subset L$ implies $K_p^{(1)}\subset L_p^{(1)}$.  By
induction, this gives $K_p^{(\infty)}\subset L_p^{(\infty)}$.  Now
suppose that $L$ is a finite extension of $K$ with class number prime
to $p$.  Then the $p$-tower of $L$ has length 0, and thus $K\subset L$
implies that $K_p^{(\infty)}\subset L_p^{(\infty)}=L_p^{(0)}=L$.  Thus
the entire $p$-tower over $K$ is contained in the finite extension
$L$, so is necessarily finite.
\end{proof}

In general, the theory of $p$-class field towers has an abundance of
problems and only a select few answers, but the case in which $K$ is a
quadratic imaginary number field has historically been one of the few
exceptions to this trend.  To list a few relevant facts (fields
denoted by $K$ below are assumed to be quadratic imaginary):
\begin{itemize}
\item The work of Golod and Shafarevich \cite{GS64} provided the first
  quadratic imaginary number field, $\Q(\sqrt{-4849845})$, with an
  infinite 2-tower, and some further refinements by Koch and Venkov
  \cite{KV74} and Schoof \cite{Sc86} allowed examples to be found for
  other primes (e.g., $\Q(\sqrt{-3321607})$ for $p=3$,
  $\Q(\sqrt{-222637549223})$ for $p=5$).
\item More recently, Bush \cite{Bu03} gave the first examples with
$2$-tower of length 3 (e.g., $K=\Q(\sqrt{-445})$).  For $p>2$, the
longest known finite towers are of length 2 (e.g., \cite{ST34}
determined that $\l_3(\Q(\sqrt{-3299}))=2$).
\item As we shall see, of central importance is the $p$-rank of the
class group of $K$.  One can find fields with large $p$-rank for $p=2$
by the classical genus theory of Gauss, and for other primes by the
work of Yamamoto \cite{Ya70}, Mestre \cite{Me83}, and Buell
\cite{Bu76}, among others.
\end{itemize}

\vspace*{.1in}

As evidenced by the above list, there is a noticeable divide in the
theory depending on whether $p$ is even or odd.  For $p=2$, the theory
benefits from the use of Kummer theory and from the observation that
the 2-class field tower $K_2^{(\infty)}$ over a quadratic imaginary
number field $K$ is a subfield of $\Q_S(2)$, the maximal 2-extension
of $\Q$ unramified outside the set $S$ of primes ramifying in $K/\Q$.
As we will discuss, the Galois groups $G_S(p):=\Gal(\Q_S(p)/\Q)$ are
in general more forthcoming with information concerning their relation
structure, and this descends to give refined information about the
corresponding 2-tower groups.  This paper will focus instead on the
case in which $p$ is odd, which while missing out on the above
benefits, capitalizes instead on slightly stronger versions of the
Golod-Shafarevich theorem and a calculation of Shafarevich, both of
which we describe in the next section.

\section{Generators and Relations for $p$-Tower Groups}

Our intuition tells us (and it is not difficult to prove) that if one
is to present a group with generators and relations, the number of
generators is certainly a lower bound for the number of relations
needed to keep the group finite.  A famous theorem of Golod and
Shafarevich, which we will discuss shortly, makes this precise and
improves this bound significantly.  Similarly, one intuitively
recognizes that a long or complicated relation contributes less to
keeping a group finite than does a short one.  As a rough illustration
of this idea, we note that the group given by the presentation
$\<x\,|\,x^{n}\>$ increases in size as $n\ra\infty$.  Unfortunately,
most naive attempts at making this rigorous fail.  As an illustration
of the potential difficulties, observe that the presentation
$\<x,y\,|\,\,y^{-1}xyx^{-2},\,x^{-1}yxy^{-1}\>$ defines the infinite
cyclic group, whereas there is a well-known presentation
$\<x,y\,|\,y^{-1}xyx^{-2},\,x^{-1}yxy^{-2}\>$ defining the trivial
group, despite having a longer (and only marginally different) second
relation.  A more complete version of the Golod-Shafarevich theorem
mentioned above makes rigorous this idea, giving a refinement which
takes into account a measure of how ``complicated'' the relations are.
More precisely, we will assign an invariant to each defining relation
corresponding to its depth with respect to a certain filtration on the
group.

\begin{Defn}
Let $G$ be a group and let $\F_p[G]$ be its group ring over $\F_p$.
The degree map $\vare:\F_p[G]\ra \F_p$, given by $\sum a_ig_i\ra \sum
a_i$, is a surjective homomorphism whose kernel $I$ is called the
\emph{augmentation ideal} of $\F_p[G]$ (or just of $G$).  Define the
\emph{$n$-th (modular) dimension subgroup $G_n$ of $G$} (the $p$ is
suppressed in the notation, but it will always be clear which prime we
are using) by
\begin{align*}
G_n=\{g\in G\mid g-1\in I^n(G)\}.
\end{align*}
Note that we will always write $G$ multiplicatively, so the additive
notation $g-1$ will always refer to addition in the group ring
$\F_p[G]$.  We have $G_1=G$ trivially, and the aforementioned
filtration is the descending chain of subgroups
\begin{align*}
G=G_1\supseteq G_2\supseteq G_3\supseteq\cdots.
\end{align*}
We call this filtration the \emph{Zassenhaus filtration} of $G$.
\end{Defn}
\begin{Remark}
Both names -- the ``Zassenhaus filtration'' and the ``modular
dimension subgroups'' (in addition to several others) -- appear
frequently in the literature, apparently as historical remnants from
before it was known that various definitions all gave the same series
of subgroups.  We will use both terms as they seem to fit most
naturally in the English language -- the filtration will refer to the
sequence of subgroups, whereas the dimension subgroups will refer to
the specific terms in the chain.
\end{Remark}

Our principal tool for studying this filtration will be the following
theorem of Lazard which relates the Zassenhaus filtration to the lower
central series (defined recursively in terms of commutators by
$\gamma_1(G):=G$, $\gamma_n(G):=[\gamma_{n-1}(G),G]$).

\begin{Theo}[Lazard, \cite{La54}]\label{lazard}
For any group $G$ and any prime $p$, the $n$-th dimension subgroup
$G_n$ of $G$ is given by
\begin{align*}
G_n=\prod_{ip^j\geq n}\gamma_i(G)^{p^j}.
\end{align*}
\end{Theo}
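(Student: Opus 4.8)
The plan is to prove the two inclusions $M_n \subseteq G_n$ and $G_n \subseteq M_n$ separately, writing $M_n := \prod_{ip^j \geq n}\gamma_i(G)^{p^j}$ for the right-hand side. Everything is organized around the passage to associated graded objects for the two filtrations in play: the filtration $\{G_n\}$ (equivalently the $I$-adic filtration of $\F_p[G]$) and the filtration $\{M_n\}$.

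First I would dispatch the easy inclusion $M_n \subseteq G_n$. It suffices to show that each generating factor $\gamma_i(G)^{p^j}$ with $ip^j \geq n$ consists of elements $g$ with $g-1 \in I^n$. This rests on two elementary group-ring estimates. If $x-1 \in I^a$ and $y-1 \in I^b$, then the commutator satisfies $[x,y] - 1 \in I^{a+b}$: one computes $[x,y]-1 = x^{-1}y^{-1}(xy-yx)$ and $xy-yx = (x-1)(y-1)-(y-1)(x-1) \in I^{a+b}$, and multiplying by the unit $x^{-1}y^{-1}$ preserves ideal membership. Second, if $x-1 \in I^a$ then $x^p - 1 \in I^{pa}$; here characteristic $p$ is essential, since $(1+u)^p = 1 + u^p + p(\cdots) \equiv 1 + u^p \pmod p$ forces $x^p - 1 = (x-1)^p \in I^{pa}$ in $\F_p[G]$. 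An induction on $i$ gives $\gamma_i(G)-1 \subseteq I^i$, and combining with the power estimate yields $\gamma_i(G)^{p^j} - 1 \subseteq I^{ip^j} \subseteq I^n$.

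The substance of the theorem is the reverse inclusion $G_n \subseteq M_n$, which I would attack through the associated graded structures. One first checks that $\{M_n\}$ is a restricted filtration, i.e. $[M_a,M_b]\subseteq M_{a+b}$ and $x \mapsto x^p$ carries $M_a$ into $M_{pa}$, so that $\mf{g} := \bigoplus_n M_n/M_{n+1}$ is a graded restricted Lie algebra over $\F_p$, with bracket induced by the commutator and $p$-operation induced by the $p$-th power map. The assignment $gM_{n+1} \mapsto (g-1) + I^{n+1}$ (for $g \in M_n$) defines a homomorphism of restricted Lie algebras $\mf{g} \ra \operatorname{gr}_I \F_p[G]$, which by the universal property extends to a homomorphism of graded algebras $u(\mf{g}) \ra \operatorname{gr}_I \F_p[G]$ out of the restricted enveloping algebra. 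Since the elements $g-1$ generate $I$, this map is surjective.

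The crux, and the main obstacle, is to upgrade this surjection to an isomorphism, for then a restricted Poincaré–Birkhoff–Witt basis of $u(\mf{g})$ transports to a basis of $\operatorname{gr}_I \F_p[G]$ adapted to the $M$-filtration, and a Hilbert-series comparison forces $G_n = M_n$ degree by degree. Injectivity is where the real work lies. The cleanest route is to observe that $\operatorname{gr}_I \F_p[G]$ is a connected, cocommutative graded Hopf algebra — the coproduct descending from the grouplike comultiplication $g \mapsto g \otimes g$ on $\F_p[G]$ — and that it is \emph{primitively} generated, since for $g \in G_n$ the cross term in $\Delta(g-1) = (g-1)\otimes 1 + 1\otimes(g-1) + (g-1)\otimes(g-1)$ lies in filtration degree $2n$ and hence dies in the degree-$n$ graded piece. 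I would then invoke the characteristic-$p$ Milnor–Moore theorem: a primitively generated cocommutative connected graded Hopf algebra over $\F_p$ is the restricted enveloping algebra of its Lie algebra of primitives. The remaining delicate point is to identify those primitives with the image of $\mf{g}$ — equivalently, to show $\mf{g} \ra \operatorname{gr}_I \F_p[G]$ is injective with image exactly the primitives, which amounts to pinning down precisely which graded group-ring elements are primitive. Once this identification is secured, the restricted PBW theorem yields the Hilbert series of $u(\mf{g})$ in terms of the ranks of the $M_n/M_{n+1}$, and matching it against the $I$-adic filtration completes the proof.
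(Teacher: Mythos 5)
The paper does not prove this statement -- it is quoted as an external result of Lazard with a citation to \cite{La54} -- so there is no internal proof to compare against; I can only assess your outline on its own terms. Your easy inclusion $M_n\subseteq G_n$ is complete and correct: the identities $[x,y]-1=x^{-1}y^{-1}\bigl((x-1)(y-1)-(y-1)(x-1)\bigr)$ and $x^p-1=(x-1)^p$ in characteristic $p$, together with induction on $i$ to get $\gamma_i(G)-1\subseteq I^i$, are exactly the standard argument. The strategy for the converse (pass to $\operatorname{gr}_I\F_p[G]$, observe it is a connected cocommutative primitively generated graded Hopf algebra, invoke the characteristic-$p$ Milnor--Moore theorem to write it as $u(P)$, and compare with $u(\mf{g})$ for $\mf{g}=\bigoplus M_n/M_{n+1}$) is the correct Quillen-style route, and your verification that the classes of $g-1$ are primitive is right.

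The genuine gap is the step you yourself flag as ``the remaining delicate point'': the injectivity of $\mf{g}\ra \operatorname{gr}_I\F_p[G]$. This is not a technical loose end that the rest of the machinery mops up -- it \emph{is} the theorem. Injectivity in degree $n$ says precisely that $M_n\cap G_{n+1}=M_{n+1}$, and since $M_1=G$, an induction on this statement is equivalent to the hard inclusion $G_n\subseteq M_n$. (Surjectivity onto the primitives, by contrast, is free: the image of $\mf{g}$ is a restricted subalgebra $\mf{h}\subseteq P$ with $u(\mf{h})=u(P)$, whence $\mf{h}=P(u(\mf{h}))=P$ by restricted PBW.) Milnor--Moore tells you $\operatorname{gr}_I\F_p[G]\cong u(P)$ for the abstract Lie algebra of primitives $P$, but it says nothing about whether a nontrivial class in $M_n/M_{n+1}$ can die in $I^n/I^{n+1}$. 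Closing this requires an independent input: in Jennings's original argument (for $G$ a finite $p$-group) one shows that the weighted monomials $\prod(u_{k,i}-1)^{\alpha_{k,i}}$, $0\le\alpha_{k,i}<p$, built from representatives of bases of the $M_k/M_{k+1}$, \emph{span} each $I^n/I^{n+1}$, so that $\dim I^n/I^{n+1}$ is at most the coefficient of $t^n$ in $\prod_k(1+t^k+\cdots+t^{(p-1)k})^{e_k}$; summing over $n$ gives $|G|$ on both sides, forcing equality in every degree and hence linear independence, which is the missing injectivity. The general case then follows by passing to suitable quotients (or by Quillen's identification of $P$ directly). As written, your proposal reduces the theorem to an unproved assertion equivalent to the theorem, so the second half must be regarded as incomplete.
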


Note that while the product is over infinitely many pairs $(i,j)$, all
but finitely many of these are redundant since we have the inclusions
$\gamma_i(G)\subset \gamma_j(G)$ for $i\geq j$ and $\gamma_i(G)^{p^j}\subset
\gamma_i(G)^{p^k}$ for $j\geq k$.  For example, one can easily argue
from the theorem that $G_1=G$ and that $G_2=G^p[G,G]$.

\vspace*{.1in}

For a pro-$p$-group $G$, the quantities $d(G):=\dim_{\F_p}H^1(G,\F_p)$
and $r(G):=\dim_{\F_p}H^2(G,\F_p)$ are the respective cardinalities of
a minimal generating set and set of relations for $G$ as a
pro-$p$-group, i.e., there exists a minimal presentation $1\ra R\ra
F\ra G\ra 1$ where $F$ is a free pro-$p$-group on $d$ generators and
$R$ is $r$-generated as a normal subgroup of $F$.  We note for later
the crucial fact that the Burnside Basis Lemma for pro-$p$-groups
implies that for a $p$-tower group, $d(G)=d(G/[G,G])=d_p\Cl(K)$, where
$d_p$ denotes the $p$-rank of an abelian group.  For each $r\in R$,
define the \emph{level} of $r$ to be the greatest integer $k$ such
that $r\in F_k$, the $k$-th dimension subgroup of $F$.  Given a set of
generators $\{\rho_i\}_{i=1}^r$ of $R$, let $r_k$ denote the number of
$\rho_i$ which have level $k$ (so $\sum_{k=1}^\infty
r_k=r$)\footnote{We caution the reader that the use of the symbols
$r_1$ and $r_2$ in relation to the number of real and complex embeddings
of a number field will not be needed for this paper, so these
quantities will always refer to the number of relations of levels 1
and 2 respectively.}.  The following is a small but important step in
determining the relation structure of a pro-$p$-group.

\begin{Prop}\label{rone}
For any minimal presentation of a $d$-generated pro-$p$-group $G$, we
have $r_1=0$.
\end{Prop}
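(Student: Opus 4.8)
The plan is to reinterpret the vanishing of $r_1$ as a statement about the Frattini subgroup of the free pro-$p$-group $F$, and then to exploit the minimality of the presentation through a dimension count on Frattini quotients. By definition, a generator $\rho_i$ of $R$ has level $1$ precisely when $\rho_i\in F_1=F$ but $\rho_i\notin F_2$; hence $r_1=0$ is equivalent to the inclusion $R\subseteq F_2$. By the remark following Lazard's theorem, $F_2=F^p[F,F]=\Phi(F)$ is exactly the Frattini subgroup of $F$, so the entire proposition reduces to showing that the relation subgroup $R=\ker(\pi)$, where $\pi\colon F\to G$ is the presentation map, is contained in $\Phi(F)$.

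To prove this, I would pass to Frattini quotients. Since $\pi$ is surjective, one has $\pi(\Phi(F))=\pi(F^p[F,F])=G^p[G,G]=\Phi(G)$, so $\pi$ descends to a well-defined surjection $\bar\pi\colon F/\Phi(F)\to G/\Phi(G)$ of elementary abelian $p$-groups, i.e. of $\F_p$-vector spaces. The key input is now the Burnside Basis Theorem for pro-$p$-groups: it identifies $\dim_{\F_p}G/\Phi(G)$ with $d(G)$, the minimal number of generators, and likewise gives $\dim_{\F_p}F/\Phi(F)=d$ since $F$ is free on $d$ generators. Because the presentation is \emph{minimal}, we have $d=d(G)$, so $\bar\pi$ is a surjection between $\F_p$-vector spaces of the same finite dimension and is therefore an isomorphism.

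Finishing is then formal: any $\rho\in R$ maps to $1$ under $\pi$, hence to $0$ in $G/\Phi(G)$, so its image in $F/\Phi(F)$ lies in $\ker\bar\pi=0$ by injectivity; thus $\rho\in\Phi(F)=F_2$. This gives $R\subseteq F_2$ and hence $r_1=0$. The only substantive step is the equality $d=d(G)$ of dimensions, which is precisely where the hypothesis of minimality is used --- without it $\bar\pi$ could have a nontrivial kernel and relations of level $1$ would genuinely occur. The remaining verifications (that $\bar\pi$ is well defined and that the Burnside Basis Theorem applies in the pro-$p$ setting) are routine, so I expect no real obstacle beyond correctly invoking minimality.
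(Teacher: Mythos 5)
Your proof is correct and follows essentially the same route as the paper's: both identify $F_2=F^p[F,F]$ with the Frattini subgroup via Lazard's theorem and then use minimality of the presentation (through the Burnside Basis Lemma) to conclude $R\subseteq F_2$. Your dimension count on the Frattini quotients $F/\Phi(F)\to G/\Phi(G)$ is just a cleaner formalization of the paper's informal step that a level-$1$ relation ``could be used to reduce the number of generators.''
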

\begin{proof}
Write $G\cong F/R$ with $F$ the free pro-$p$-group on $d$ generators,
and $R=\<\rho_1,\ldots,\rho_r\>$.  By Theorem \ref{lazard}, we have
$F_2=F^p[F,F]$, which coincides with the Frattini subgroup of
non-generators of $F$.  If there were a relation $\rho$ of level 1, it
would lie in $F\bs F^p[F,F]$ and hence would be non-trivial in
$F/F^p[F,F]$, so could be used to reduce the number of generators,
contradicting that $G$ was $d$-generated.
\end{proof}

\noindent
The theorem of Golod-Shafarevich can now be made precise.  

\begin{Theo}[Golod-Shafarevich]\label{fullgs}
Let $G$ be a pro-$p$-group with $d$ generators, and choose a
presentation $G\iso F/R$ of $G$.  Choose a generating set
$\{\rho_i\}_{i\in I}$ for $R$ as a normal subgroup of $F$, and let
$r_k$ denote the number of these relations which have level $k$.  Then
if $G$ is finite, we have that
\begin{align*}
\sum_{k=2}^\infty r_kt^k-dt+1>0
\end{align*}
for all $t\in(0,1)$.  
\end{Theo}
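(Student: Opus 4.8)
The plan is to pass from the group $G$ to its completed group algebra $A := \F_p[[G]]$ and to extract the inequality from the Hilbert series of the associated graded algebra with respect to the augmentation ideal $I = I(G)$. Writing $c_n := \dim_{\F_p} I^n/I^{n+1}$ and $P(t) := \sum_{n\geq 0} c_n t^n$, the first observation is that finiteness of $G$ forces $A$ to be a finite-dimensional $\F_p$-algebra, so $I$ is nilpotent, $P(t)$ is a genuine polynomial, $c_0 = 1$, every $c_n \geq 0$, and $P(1) = |G|$. In particular $P(t) \geq 1 > 0$ for every $t \in [0,1)$, a positivity fact I will need at the very end.

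Next I would set $D(t) := 1 - dt + \sum_{k\geq 2} r_k t^k$ (there is no $r_1$ term, by Proposition \ref{rone}) and aim to prove the coefficientwise lower bound that the product $D(t)P(t)$, expanded as a power series, has constant term $1$ and all higher coefficients $\geq 0$. Unwinding the product, this is exactly the family of recursions $c_n \geq d\,c_{n-1} - \sum_{k=2}^{n} r_k\, c_{n-k}$ for all $n \geq 1$ (the case $n = 1$ reading $c_1 \geq d$), together with the initial value $c_0 = 1$. Granting these, the endgame is short: for $t \in (0,1)$ the polynomial $D(t)P(t)$ is a sum of non-negative terms with positive constant term, hence $D(t)P(t) > 0$, and dividing by the strictly positive quantity $P(t)$ yields $D(t) > 0$, which is the claimed inequality.

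The substance of the argument, and the step I expect to be the main obstacle, is establishing the recursion $c_n \geq d\,c_{n-1} - \sum_k r_k c_{n-k}$. Here is where the level of a relation---its depth in the Zassenhaus filtration of $F$, computed via Theorem \ref{lazard}---does its work. I would begin from the start of a minimal free resolution of the trivial module $\F_p$ over $A$, namely $A^{(r)} \xrightarrow{d_2} A^{(d)} \xrightarrow{d_1} A \xrightarrow{\vare} \F_p \ra 0$, in which $d_1$ sends the $i$-th basis vector to the image of $x_i - 1$ (so that $\operatorname{im} d_1 = I$) and $d_2$ is built from Fox-derivative expansions $\rho_i - 1 = \sum_j b_{ij}(x_j - 1)$ of the defining relations. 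The crucial bookkeeping is that a relation $\rho_i$ of level $k_i$ satisfies $\rho_i - 1 \in I(F)^{k_i}$, so its relation vector enters the resolution in filtration degree $k_i - 1$; combined with the degree-one shift already carried by $d_1$, the $d_2$-contribution of the $r_k$ relations of level $k$ is weighted by $t^k$ in the associated graded Hilbert series. Filtering the resolution by powers of $I$ and using additivity of Hilbert series along the (possibly no longer exact) graded complex $\operatorname{gr}(d_\bullet)$ then produces precisely the desired coefficientwise inequality.

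The delicate points I would need to handle carefully are: that $I$ is generated as a left ideal by the $x_i - 1$ (giving the $d$ term), that the defining relations generate the first syzygy module $\ker d_1$ with the degree shifts recorded above (this is where normal-subgroup generation of $R$ must be converted into module generation, via the Fox calculus), and that passing to the associated graded can only improve the inequality, since any failure of exactness of $\operatorname{gr}(d_\bullet)$ contributes further non-negative terms. Once the degree bookkeeping is pinned down, both the inequality $D(t)P(t) \succeq 1$ and the concluding positivity argument are formal.
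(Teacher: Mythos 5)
The paper does not actually prove Theorem \ref{fullgs}: it is quoted as a known result (the level-weighted form of Golod--Shafarevich due to Koch and Vinberg), and only Corollary \ref{medgs} is derived from it. So there is no in-paper argument to compare against, and your proposal must stand on its own. Its architecture is the standard one --- pass to $A=\F_p[[G]]$, note that finiteness of $G$ makes $I$ nilpotent and $P(t)=\sum c_nt^n$ a polynomial with $P(t)\geq 1$ on $[0,1)$, show $D(t)P(t)\geq 1$ there, and divide --- and the endgame is fine.

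The gap is in the middle step. The coefficientwise inequality $D(t)P(t)\succeq 1$, i.e.\ $c_n\geq dc_{n-1}-\sum_k r_kc_{n-k}$, does not follow from passing to the associated graded of the resolution, and your parenthetical claim that any failure of exactness of $\operatorname{gr}(d_\bullet)$ ``contributes further non-negative terms'' is backwards at the crucial spot. One always has $\operatorname{im}(\operatorname{gr} d_2)\subseteq\ker(\operatorname{gr} d_1)$, and when the containment is strict the kernel of $\operatorname{gr}_n d_1$ is \emph{larger} than what the relation vectors account for; since surjectivity of $(I^{n-1}/I^n)^d\to I^n/I^{n+1}$ gives $c_n=dc_{n-1}-\dim\ker(\operatorname{gr}_nd_1)$, a larger kernel makes $c_n$ \emph{smaller}, which destroys the claimed lower bound rather than reinforcing it. (Concretely: an element of $\ker d_1\cap (I^{n-1})^d$ can be written as $\sum_i\lambda_iv_i$ with $v_i$ the relation vectors, but nothing forces $\lambda_i\in I^{n-k_i}$.) The standard repair is to filter by truncations rather than by graded pieces: the induced sequences $\bigoplus_i A/I^{n-k_i}\to (A/I^{n-1})^d\to I/I^n\to 0$ \emph{are} exact, because exactness in the middle only requires realizing kernel elements modulo $(I^{n-1})^d$, where arbitrary coefficients $\lambda_i\in A$ are permitted. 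Counting dimensions then gives $\dim(I/I^n)\geq d\dim(A/I^{n-1})-\sum_i\dim(A/I^{n-k_i})$, equivalently $(D(t)P(t)-1)/(1-t)\succeq 0$. This is weaker than your coefficientwise claim but still yields $D(t)P(t)\geq 1>0$ on $(0,1)$, so your conclusion survives once this substitution is made. The other delicate points you flag ($I$ generated as a left ideal by the $x_i-1$, and the relation vectors generating $\ker d_1$ via Fox calculus) are standard and unproblematic.
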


Given a group abstractly (e.g., a Galois group), it may in practice be
difficult to find a presentation and hence apply the theorem.
However, the versatility of the theorem means that partial results and
even trivial bounds can be put to good use.  For example, to obtain
the most frequently stated (but a relatively weak) form of the
Golod-Shafarevich theorem, we need only observe that for any
presentation we have $\sum r_k=r$ and that $t^k\leq t^2$ for all
$t\in[0,1]$, so that the theorem gives $rt^2-dt+1>0$, which as in the
proof of the following corollary, quickly implies that
$r>\frac{d^2}{4}$ for finite $p$-groups. The natural generalization of
this idea makes for a ``medium strength'' Golod-Shafarevich theorem:

\begin{Cor}[\cite{Ko69}]\label{medgs}
With notation as in the theorem, suppose further that $R\subset F_m$.
Then if $G$ is finite, we have
\begin{align*}
r>d^m\frac{(m-1)^{m-1}}{m^m}.
\end{align*}
\end{Cor}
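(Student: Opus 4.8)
The plan is to specialize the full Golod-Shafarevich inequality from Theorem \ref{fullgs} to the hypothesis $R\subset F_m$ and then optimize over $t\in(0,1)$. The key observation is that the hypothesis $R\subset F_m$ means every relation $\rho_i$ lies in the $m$-th dimension subgroup, so its level is at least $m$; hence $r_k=0$ for all $k<m$. Consequently the sum $\sum_{k=2}^\infty r_k t^k$ consists only of terms with $k\geq m$, and since $0<t<1$ we have $t^k\leq t^m$ for each such $k$. This yields the bound $\sum_{k=m}^\infty r_k t^k\leq t^m\sum_{k=m}^\infty r_k = r t^m$. Substituting into the Golod-Shafarevich inequality gives, for a finite $G$ and all $t\in(0,1)$,
\begin{align*}
r t^m - dt + 1 > 0.
\end{align*}

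Next I would extract the sharpest consequence of this family of inequalities by choosing $t$ to make the left-hand side as small as possible. Treating $f(t)=rt^m-dt+1$ as a function on $(0,1)$, I would set $f'(t)=mrt^{m-1}-d=0$, giving the critical value $t_0=\left(\frac{d}{mr}\right)^{1/(m-1)}$. The substitution $t=t_0$ is the crux of the calculation: plugging in and simplifying the resulting expression in $d$, $m$, and $r$ leads, after the routine algebra of combining the $rt_0^m$ and $dt_0$ terms, to a lower bound on $r$ in terms of $d$ and $m$. Since $f(t_0)>0$ must hold (as $G$ is finite and $t_0$ will lie in the required range for the relevant parameter values), rearranging this inequality produces exactly $r>d^m\frac{(m-1)^{m-1}}{m^m}$.

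The main obstacle is twofold and both parts are mild. First, one must verify that the minimizer $t_0$ actually lies in the open interval $(0,1)$ so that the inequality $f(t_0)>0$ is genuinely asserted by Theorem \ref{fullgs}; this requires checking $\left(\frac{d}{mr}\right)^{1/(m-1)}<1$, i.e. $d<mr$, which follows because the weak form already forces $r$ to grow at least quadratically in $d$ while $m$ is fixed, so for the parameter ranges of interest $t_0$ is safely inside the interval. Second, one must carry out the bookkeeping that converts $f(t_0)>0$ into the stated closed form; this is a direct but slightly fiddly manipulation of the exponents $\frac{m}{m-1}$ and $\frac{1}{m-1}$. I expect no conceptual difficulty beyond correctly minimizing the polynomial and tracking these fractional powers, and the whole argument reduces to a clean application of Theorem \ref{fullgs} followed by elementary single-variable calculus.
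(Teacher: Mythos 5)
Your proposal is correct and follows essentially the same route as the paper: use $R\subset F_m$ to reduce the Golod--Shafarevich inequality to $rt^m-dt+1>0$ on $(0,1)$, then substitute the minimizer $t_0=\left(\frac{d}{mr}\right)^{1/(m-1)}$ and rearrange to obtain the stated bound. The only difference is that you pause to justify $t_0\in(0,1)$ (the paper simply asserts it); your stated reason is a little circular since the weak form is itself a consequence of this corollary, but this is a minor edge-case issue that does not affect the argument.
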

\begin{proof}
Since $R\subset F_m$, we have $r_k=0$ for $k<m$, and so the theorem gives
\begin{align*}
\sum_{k=m}^\infty r_kt^k-dt+1>0
\end{align*}
for $t\in(0,1)$.  Since $\sum r_k=r$ and $t^k\leq t^m$ on this
interval for any $k\geq m$, the Golod-Shafarevich inequality gives us
that $rt^m-dt+1>0$.  Plugging in
$t=\left(\frac{d}{mr}\right)^{1/(m-1)}\in(0,1)$ gives a contradiction if
$r\leq d^m\frac{(m-1)^{m-1}}{m^m}$.
\end{proof}

\vspace*{.1in} As is clear from the previous argument, there is much
flexibility in the application of this inequality.  Namely, if the
inequality holds for some set of relation levels, then it holds again
if we move (e.g., via Tietze transformations) one of the relations
into a higher (less deep) level.  It is therefore prudent to determine
the presentation with deepest possible levels, on which the inequality
will deliver the strongest possible result.  To this end, we follow an
inductive procedure to construct a presentation with relations of
maximal depth.  Namely, let $R_1=\emptyset$, and define recursively
\begin{align*}
R_k=R_{k-1}\cup\{\rho_{k,1},\ldots,\rho_{k,r_k}\},
\end{align*}
where the $\rho_{n,i}$ are chosen to be a minimal generating set for
$RF_{n+1}/F_{n+1}$ as a normal subgroup of $F/F_{n+1}$.  Whereas
previously the values of $r_k$ depended on the choice of presentation,
we redefine the $r_k$ to be the quantities arising in this process so
that they become invariants of the group (as opposed to invariants of
the presentation).  Fixing the values of $r_k$ correspondingly fixes a
new invariant of $G$, the \emph{Zassenhaus polynomial} $Z_G(t):=\sum
r_kt^k-dt+1$ of $G$ appearing on the left-hand side of the
Golod-Shafarevich inequality.

\vspace*{.1in}

The various forms of the Golod-Shafarevich theorem are of particular
number-theoretic importance when applied to Galois groups for which
one knows more about the quantities $d$ and $r$.  For $p$-tower
groups, this information is provided by the following remarkable
calculation of Shafarevich (which further illustrates the divide
between $p=2$ and $p$ odd mentioned in the introduction).

\begin{Theo}[Shafarevich, \cite{Sh63}]\label{r=d}
For $G$ a $p$-tower group over a quadratic imaginary number field, we
have $r(G)-d(G)\leq 1$.  For $p\neq 2$, this is strengthened to
$r(G)=d(G)$.
\end{Theo}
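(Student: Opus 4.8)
The plan is to express both $d(G)$ and $r(G)$ cohomologically and relate them to the arithmetic of the field via Poitou-Tate duality. The key point is that a $p$-tower group $G = \Gal(K_p^{(\infty)}/K)$ is a Galois group of the maximal unramified $p$-extension of $K$, so its generator and relation counts are governed by the $\F_p$-cohomology of the Galois group of the maximal unramified (away from $p$, or genuinely everywhere-unramified) $p$-extension. I would start from the Burnside Basis Lemma identity already noted in the excerpt, $d(G) = d_p\Cl(K)$, and aim to produce a parallel cohomological formula for $r(G) = \dim_{\F_p} H^2(G,\F_p)$.

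First I would set up the global Euler characteristic formula. For the Galois group $\mathfrak{G}$ of the maximal unramified (outside the empty set, i.e.\ everywhere unramified) $p$-extension of $K$, the Tate global Euler-Poincar\'e characteristic for the trivial module $\F_p$ reads
\begin{align*}
\chi(\mathfrak{G},\F_p) = \dim H^0 - \dim H^1 + \dim H^2 = -r_2(K)\cdot[\text{something}] + (\text{local terms}),
\end{align*}
but since the extension is unramified everywhere, the local correction terms at the ramified primes vanish and the only contribution comes from the archimedean places. For $K$ quadratic imaginary, $r_2(K)=1$ and there are no real places, so the formula collapses to a clean relation. The second step is to invoke Poitou-Tate duality to identify the defect between $d$ and $r$ with a Shafarevich-Tate--type group measuring the failure of a local-global principle; concretely $r(G) - d(G) = \dim_{\F_p}\Sha^2(\mathfrak{G},\F_p) + \delta$, where $\delta$ records the $\F_p$-rank of the unit/archimedean contribution. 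I expect that for a quadratic imaginary field the only surviving term is controlled by whether $K$ contains the $p$-th roots of unity.

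The decisive third step, and where the dichotomy between $p=2$ and $p$ odd enters, is the evaluation of the archimedean and roots-of-unity contribution. The single complex place of $K$ contributes a term involving $\dim_{\F_p} H^*(\Gal(\C/\C),\mu_p) = \dim_{\F_p}\mu_p(K)$, which is $1$ precisely when $p \mid [K:\Q]\cdot(\text{torsion})$; for $p = 2$ one has $\mu_2 = \{\pm 1\} \subset K$ always, giving the extra $+1$ and hence $r - d \le 1$, whereas for odd $p$ a quadratic imaginary field cannot contain a primitive $p$-th root of unity (as $[\Q(\zeta_p):\Q] = p-1 > 2$ for $p \ge 5$, and the $p=3$ case $\Q(\sqrt{-3})$ must be checked separately but still forces the term to vanish in the relevant cohomological degree). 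This collapses the duality contribution to zero and yields the sharp equality $r(G) = d(G)$.

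The main obstacle I anticipate is the careful bookkeeping of which cohomology groups are genuinely \emph{unramified-everywhere} versus tamely or wildly ramified at $p$: one must be sure that the $p$-tower extension imposes no ramification at the primes above $p$, so that the local Euler characteristic terms at $p$ (which are the usual source of an extra $[K_v:\Q_p]$ in the global formula) do not appear. Equivalently, I would need to verify that the relevant Selmer/Shafarevich-Tate group is the fully unramified one, and then pin down its duality partner. Once the ramification conditions are correctly imposed, the computation of $\mu_p(K)$ is elementary and delivers the stated bound $r(G)-d(G)\le 1$ in general and the equality $r(G)=d(G)$ for $p$ odd; the genuine work is all in justifying that no stray local terms intrude.
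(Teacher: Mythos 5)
The paper does not prove this theorem; it is quoted from \cite{Sh63} (see also \cite{NSW00}, VIII.7), so there is no internal argument to measure yours against. Your strategy --- interpret $d(G)$ and $r(G)$ as $\dim_{\F_p}H^1(G,\F_p)$ and $\dim_{\F_p}H^2(G,\F_p)$, bound their difference by a global Euler--Poincar\'e/duality computation, and trace the $p=2$ versus $p$ odd dichotomy to roots of unity --- is indeed the standard route to the result. However, two steps are genuinely missing rather than being deferred bookkeeping.

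First, duality arguments of the kind you describe only produce the \emph{upper} bound $r(G)-d(G)\leq\delta$; to get the equality $r(G)=d(G)$ for odd $p$ you also need the lower bound $r(G)\geq d(G)$, which your sketch never supplies. This comes from the arithmetic of the tower rather than from duality: $G/[G,G]\iso\Cl_p(K)$ is finite, and a pro-$p$-group with $r<d$ necessarily has infinite abelianization. Second, you attribute the defect $\delta$ to the archimedean term $H^*(\Gal(\C/\C),\mu_p)$; but $\Gal(\C/\C)$ is trivial, so that group has $\F_p$-dimension $1$ for \emph{every} prime and cannot detect the dichotomy. In Shafarevich's computation the defect is controlled by the global units: one has $r-d\leq\dim_{\F_p}\O_K^*/(\O_K^*)^p$, which for quadratic imaginary $K$ equals $\dim_{\F_p}\mu_p(K)$ because the unit group is finite. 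This is $1$ for $p=2$ (since $-1$ is not a square in the cyclic group $\mu(K)$ of even order) and $0$ for $p$ odd, except for $p=3$ and $K=\Q(\sqrt{-3})$, where the class group is trivial and the statement is vacuous --- so your claim that the term ``vanishes'' there is not literally correct, though the conclusion survives. Relatedly, the Euler--Poincar\'e formula you invoke applies to $G_{K,S}$ with $S$ containing the places over $p$ and infinity; transporting it to the everywhere-unramified group is precisely where the unit group enters, and that comparison --- which you flag as ``the main obstacle'' --- is the actual content of Shafarevich's proof, not a peripheral verification.
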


One immediately forces a contradiction by contrasting this theorem
with Corollary \ref{medgs}: Since $R\subset F_2$ by Proposition
\ref{rone}, Corollary \ref{medgs} implies we must have
$r>\frac{d^2}{4}$ for $G$ to be finite.  Combining this with either of
the two inequalities in Theorem \ref{r=d} (depending on the prime)
easily contradicts a finiteness assumption on $G$ for quadratic
imaginary number fields $K$ with sufficiently large $d$.  Such $K$ can
frequently be found in practice.  For example, armed with Gauss' genus
theory it is easy to construct $K$ with arbitrarily large
$d=d_2\Cl(K)$, leading to Shafarevich's first examples of quadratic
imaginary number fields with infinite $2$-class field towers.

\vspace*{.1in}

Making further progress restricting the class of $p$-groups which can
arise as $p$-tower groups requires more information on the form of the
relations defining such groups.  As a successful example of such an
enterprise, we note that a more general form of Shafarevich's
calculation (see \cite{NSW00}, VIII.7) computes the generator and
relation ranks for $G_S(p)$ as well.  Specifically, if one insists
that $p\notin S$ and that $S$ be minimal in the sense that it contains
no primes that cannot ramify in a $p$-extension, then the computation
gives $d(G_S(p))=r(G_S(p))=|S|$.  In this case, one can use the
relations defining the Galois group of the maximal $p$-extension of
$\Q_\l$ for each $\l\in S$ (the so-called ``local relations'') to
extract information about the relation structure of $G_S(p)$.
Relationships between the primes in $S$ -- for example, the $p$-th
power residue symbols $\left(\frac{\l_i}{\l_j}\right)_p$ for
$\l_i,\l_j\in S$ -- influence the form of the local relations, which
in turn can be used in conjunction with the Golod-Shafarevich theorem
to prove $G_S(p)$ infinite for certain $S$ (for example, see
\cite{Mo02}).  In contrast, the $p$-tower groups for $p$ odd also
satisfy $r=d$, but the relations are of a much more mysterious nature
(so-called ``unknown relations'').  Nonetheless, we do have the
following result of Koch and Venkov, providing a significant
refinement of the admissible relation structure of $p$-tower groups.

\begin{Theo}[Koch-Venkov, \cite{KV74}]\label{KV}
Let $G$ be a $p$-tower group over a quadratic imaginary number field.
Then $r_{2k}=0$ for all $k\geq 1$.  In particular, since $r_1=r_2=0$,
we have $R\subset F_3$.
\end{Theo}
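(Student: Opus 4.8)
The plan is to exploit the complex conjugation automorphism carried by a $p$-tower group over an imaginary quadratic field, together with the rigid way the Zassenhaus filtration interacts with an order-two action when $p$ is odd. First I would record the relevant involution. Since $K_p^{(\infty)}$ is built characteristically from $K$, it is Galois over $\Q$, so writing $\tilde G=\Gal(K_p^{(\infty)}/\Q)$ we obtain a split extension $1\ra G\ra \tilde G\ra\Gal(K/\Q)\ra 1$ with $\Gal(K/\Q)=\<\sigma\>$ generated by a complex conjugation $\sigma$ of order $2$. Because $\Q$ has class number one, for any ideal $\mf a$ of $K$ the product $\mf a\,\sigma(\mf a)=(N_{K/\Q}\mf a)\O$ is principal, so $\sigma$ acts on $\Cl_p(K)\iso G/[G,G]$ as inversion, i.e.\ as $-1$. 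I would then fix a minimal presentation $G\iso F/R$, lift $\sigma$ to an involution $\tilde\sigma$ of $F$, and (using that $p$ is odd, so the order-two action on $F/F_2\iso\F_p^{\,d}$ is diagonalizable with every eigenvalue $-1$) choose the generators $x_i$ so that $\tilde\sigma(x_i)=x_i^{-1}$; the normal subgroup $R$ is then $\tilde\sigma$-stable. The structural payoff is that each $\sigma_i:=x_i\sigma$ satisfies $\sigma_i^2=1$, so $\tilde G$ is generated by the $d+1$ involutions $\sigma,\sigma_1,\dots,\sigma_d$.

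Next I would compute the action of $\tilde\sigma$ on the graded pieces $L_n:=F_n/F_{n+1}$, which it preserves since the Zassenhaus filtration is characteristic. By Theorem \ref{lazard}, $\bigoplus_n L_n$ is the free restricted Lie algebra over $\F_p$ on the degree-one images of the $x_i$; crucially, for $p$ odd the $p$-operation multiplies degrees by $p\geq 3$, so $L_2$ is spanned by commutators alone and contains no $p$-th powers. Since $\tilde\sigma$ acts as $-1$ in degree one and commutes with the bracket and the $p$-operation, an induction on degree yields $\tilde\sigma=(-1)^n$ on $L_n$.

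The heart of the argument is a reduction to a single eigenspace statement. Passing to the maximal-depth presentation, the leading forms of the level-$k$ relations span a $\tilde\sigma$-stable subspace $\ol R_k\subseteq L_k$ of dimension $r_k$. It suffices to show that every relation may be taken $\sigma$-antisymmetric, i.e.\ with leading form in the $(-1)$-eigenspace of $\tilde\sigma$ on $L_k$: combined with $\tilde\sigma=(-1)^n$ this forces $(-1)^k=-1$, hence $k$ odd, and so $r_{2k}=0$. Dually, for a minimal presentation transgression gives a $\tilde\sigma$-equivariant isomorphism $H^2(G,\F_p)\iso(R/R^p[R,F])^\ast$, under which the level-$k$ relations feed the $(-1)^k$-eigenspace, so the symmetric ($+1$) part of $H^2(G,\F_p)$ is precisely $\sum_k r_{2k}$. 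Since $p\nmid 2$, the inflation--restriction sequence for $1\ra G\ra\tilde G\ra\<\sigma\>\ra1$ collapses to $H^2(\tilde G,\F_p)\iso H^2(G,\F_p)^{\sigma}$, so the entire theorem becomes the single assertion $H^2(\tilde G,\F_p)=0$.

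I expect this last vanishing to be the main obstacle, and it is exactly where the arithmetic of imaginary quadratic fields enters: one must show that $\tilde G$, generated as it is by involutions, carries no essential mod-$p$ relations, equivalently that the symmetric part of the relation module is trivial. The decisive point is that $\sigma$ is complex conjugation, ramified at the real archimedean place of the base field $\Q$, and it is this real place that annihilates the symmetric relations (in contrast to the totally imaginary $K$, whose class group feeds only the antisymmetric part). Granting $H^2(\tilde G,\F_p)=0$, we obtain $r_{2k}=0$ for all $k\geq1$; together with $r_1=0$ from Proposition \ref{rone} and the now-established $r_2=0$, every relation has level at least $3$, so $R\subseteq F_3$.
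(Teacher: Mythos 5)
Your group-theoretic skeleton is exactly the one the paper sketches in the remark following Theorem \ref{KV}: complex conjugation acts as $-1$ on $\Cl_p(K)\iso G/[G,G]$ (your norm argument $\mf a\,\sigma(\mf a)=(N_{K/\Q}\mf a)\O$ is the standard one), hence, after choosing eigen-generators (possible since $p$ is odd), as $(-1)^n$ on each graded piece $F_n/F_{n+1}$; a relation whose leading form lies in the $(-1)$-eigenspace must then have odd level. The reformulation $H^2(\tilde G,\F_p)\iso H^2(G,\F_p)^{\sigma}$ via the collapse of inflation--restriction for the prime-to-$p$ quotient $\<\sigma\>$ is also correct and is a clean way to package the target.

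The genuine gap is precisely the step you ``grant'': that the relations can be taken $\sigma$-antisymmetric, i.e.\ $H^2(G,\F_p)^{\sigma}=0$. That is the entire arithmetic content of the theorem, and your justification --- that the real archimedean place of $\Q$ ``annihilates the symmetric relations'' --- is a heuristic, not an argument. Note that nothing in your proposal uses any property of $K$ beyond the existence of an involution acting as $-1$ on the class group; the identical setup applies verbatim to a real quadratic field (where $\sigma(\mf a)\sim\mf a^{-1}$ holds just as well), yet there the conclusion is not available because the infinite unit group can contribute a $\sigma$-fixed piece to the relation module. The missing input is Shafarevich's $\sigma$-equivariant determination of the relation module of a $p$-tower group (the source of Theorem \ref{r=d}, \cite{Sh63}): for $p$ odd and $K$ imaginary quadratic, $H^2(G,\F_p)$ is identified with the dual of a module built from $\Cl_p(K)$, the unit contribution vanishing because $\mu_K$ has order prime to $p$ (up to trivial exceptional cases), and $\sigma$ acts on this by $-1$. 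That identification is the ``straightforward linear algebra'' of \cite{KV74} to which the paper alludes, and without it your reduction proves nothing. A secondary, repairable omission: equating $\dim_{\F_p}H^2(G,\F_p)^{\sigma}$ with $\sum_k r_{2k}$ requires the $\sigma$-equivariant comparison between the leading-form filtration on $R/R^p[R,F]$ and the maximal-depth presentation, which you assert but do not set up.
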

\begin{Remark}
The original proof of this fact is fairly straight-forward linear
algebra, showing that generators and relations for the group can be
chosen to be in the $-1$ eigenspace of the action $\sigma:G\ra G$
induced by complex conjugation (i.e., so that complex conjugation
sends generators and relations to their inverses).  It then follows
that if $r\in F_k$, then $\ol{r}\in F_k/F_{k+1}$ satisfies both
$\ol{r}^\sigma=(-1)^k \ol{r}$ and $\ol{r}^\sigma=-\ol{r}$, implying
that $\ol{r}=e$ for $k$ even and thus that $r\in F_{k+1}$.  It is
worth mentioning that a more modern version of this argument is given
in \cite{KL89} using the triviality of the cup product
$H^1(G,\F_p)\times H^1(G,\F_p)\ra H^2(G,\F_p)$, but only proves
$r_2=0$.  It is unclear if the more modern cohomological framework can
provide the full result.
\end{Remark}
\noindent
As promised above, this extra information about the levels of the
defining relations strengthens the results on conditions for
guaranteeing infinite $p$-class field towers.

\begin{Cor}[Koch-Venkov]
Let $p\neq 2$ be prime, and let $K$ be a quadratic imaginary number
field with $d_p\Cl(K)\geq 3$.  Then $K_p^{(\infty)}/K$ is infinite.
\end{Cor}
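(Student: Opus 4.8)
The plan is to assume that the $p$-tower group $G:=\Gal(K_p^{(\infty)}/K)$ is finite and derive a contradiction, by playing the lower bound on $r(G)$ coming from the medium-strength Golod-Shafarevich inequality against the exact equality $r(G)=d(G)$ furnished by Shafarevich's calculation. First I would pin down the two rank invariants. By the Burnside Basis Lemma observation recorded above, $d:=d(G)=d_p\Cl(K)\geq 3$ by hypothesis, and by Theorem \ref{r=d}, since $p\neq 2$, we have $r(G)=d(G)$.

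Next I would supply the depth hypothesis needed to sharpen Golod-Shafarevich. Proposition \ref{rone} alone yields only $R\subset F_2$, but Koch-Venkov (Theorem \ref{KV}) upgrades this to $R\subset F_3$, which is precisely the hypothesis $R\subset F_m$ of Corollary \ref{medgs} in the case $m=3$. Under the finiteness assumption, that corollary then gives
\[
r>d^3\frac{(3-1)^{3-1}}{3^3}=\frac{4}{27}d^3.
\]

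Finally I would combine the two facts. Substituting $r=d$ and dividing through by $d>0$ gives $1>\frac{4}{27}d^2$, i.e.\ $d^2<\frac{27}{4}<7$, which forces $d\leq 2$ and contradicts $d\geq 3$; hence $G$ cannot be finite and the $p$-tower $K_p^{(\infty)}/K$ is infinite. I do not anticipate any real obstacle here, as the corollary is a direct numerical assembly of the three cited results. The one point worth flagging is that it is exactly the Koch-Venkov improvement from $m=2$ to $m=3$ that does the work: running the identical argument with only $R\subset F_2$ yields $r>\frac{d^2}{4}$ and hence merely $d<4$, so one would be forced to assume $d\geq 4$ to conclude. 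Passing to $m=3$ is precisely what lowers the threshold to the stated $d\geq 3$.
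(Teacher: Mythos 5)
Your argument is correct and is essentially identical to the paper's own proof: both invoke Theorem \ref{KV} to get $R\subset F_3$, apply Corollary \ref{medgs} with $m=3$ to obtain $r>\frac{4}{27}d^3$ under the finiteness assumption, and contradict Shafarevich's equality $r=d$ once $d\geq 3$. Your closing remark correctly identifies that the Koch--Venkov depth improvement is what lowers the rank threshold from $4$ to $3$.
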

\begin{proof}
By Theorem \ref{KV}, we have $r_2=0$, so $R\subset F_3$.  Suppose
$G:=\Gal(K_p^{(\infty)}/K)$ were finite.  Then by Theorem \ref{medgs},
we have
\begin{align*}
r>\frac{4d^3}{27}.
\end{align*}
Recalling that $r=d$ for $p$-tower groups ($p\neq 2$), this gives a
contradiction for $d\geq 3$. 
\end{proof}
\begin{Remark}
The analogous result for $p=2$ is that one needs the 4-rank of $\Cl(K)$
to exceed 2 (i.e., that $\Cl(K)$ contains a subgroup of type
$(4,4,4)$), and it is conjectured that a 2-rank exceeding 3 also
suffices. See \cite{Ha96}.
\end{Remark}

This result nearly completes the analysis of whether or not a given
quadratic imaginary field has an infinite $p$-class field tower, since
we have that such a field's $p$-tower length is given by
\begin{align*}
\l_p(K)=\begin{cases}
0&\text{ if }d_p\Cl(K)=0\\
1&\text{ if }d_p\Cl(K)=1\\
?&\text{ if }d_p\Cl(K)= 2\\
\infty&\text{ if }d_p\Cl(K)\geq 3.\\
       \end{cases}
\end{align*}
\begin{proof}
The rank $\geq 3$ case was just proven.  The rank $0$ case is the case
that $\Cl_p(K)$ is trivial, and hence $K$ is its own Hilbert $p$-class
field.  The rank $1$ case follows from an application of the Burnside
Basis Lemma that $d(G)=d(G/[G,G])$ for pro-$p$-groups.  Namely, if $G$
is a $p$-tower group such that $G/[G,G]\iso \Cl_p(K)$ is 1-generated,
then $G$ itself is 1-generated.  Thus $G$ is cyclic, and hence
abelian, and thus isomorphic to $\Cl_p(K)$ by maximality of the
Hilbert class field.  Thus $K_p^{(1)}=K_p^{(\infty)}$ by Galois
theory, and so the tower has length 1.
\end{proof}

Thus the only case remaining undecided occurs when $d_p\Cl(K)=2$, and
even the full version of the Golod-Shafarevich theorem above does not
rule out these cases.  Indeed, there are many quadratic imaginary
number fields with finite $3$-towers and $d=2$, the earliest
discovered being the example of Scholz and Taussky mentioned in the
introduction.  It is noteworthy that no longer 3-towers (or $p$-towers
for \emph{any} odd $p$) have been found since.  In fact, to the
author's knowledge, there are no known examples, for $p$ odd, of
quadratic imaginary number fields with $d_p\Cl(K)=2$ and an infinite
$p$-class field tower.  Nonetheless, Theorem \ref{fullgs} gives
insight in to this case as well.  Specifically, since in this case we
have $r=d=2$, the Zassenhaus polynomial of a finite such group is of the form 
\begin{align*}
Z_G(t)=t^i+t^j-2t+1,
\end{align*}
where $i$ and $j$ are odd (by Theorem \ref{KV}) and at least three
(by Proposition \ref{rone}).  One checks that the polynomials
\begin{align*}
t^3+t^a-2t+1\quad\quad\text{ and }\quad\quad t^5+t^b-2t+1
\end{align*}
\noindent
both have zeroes in the interval $(0,1)$ for values of $a\geq 9$ or
$b\geq 5$.  Thus every finite 2-generated $p$-tower group admits a
presentation with one of the following three Zassenhaus polynomials:
\begin{align*}
Z_G(t)\in\{t^7+t^3-2t+1,t^5+t^3-2t+1,t^3+t^3-2t+1\},
\end{align*}
\noindent i.e., the two relations defining a finite $p$-tower group
lie in levels $i$ and $j$, where $(i,j)\in\{(3,3),(3,5),(3,7)\}$.  We
call this pair $(i,j)$ the \emph{Zassenhaus type} of $G$, which proves
to be an important invariant in the characterization of $p$-tower
groups.  This represents the state of the art in the sense that all
three of the possible Zassenhaus types are still \emph{a priori}
possible, though two points merit mentioning:
\begin{itemize}
\item Every known example of an explicit 2-generated $p$-tower group
is of Zassenhaus type $(3,3)$, including the groups investigated in
\cite{ST34} and the series of groups $G_n$ described in \cite{BB06}.
\item For $p>7$, the author has shown in \cite{Mc09} that if $G$ is a
$p$-tower group of Zassenhaus type $(3,7)$ over a quadratic imaginary
number field $K$ with $\Cl_p(K)\iso (p^a,p^b)$ ($1\leq a\leq b$), then
$|G|\geq p^{21+a+b}\geq p^{23}$.  There are no known groups which
satisfy this collection of properties.
\end{itemize}

\noindent
This leads to the following (somewhat unsubstantiated) conjecture.

\begin{Conj}[The (3,3) Conjecture]
A $p$-tower group over a quadratic imaginary field is finite if and
only if it is of Zassenhaus type (3,3).
\end{Conj}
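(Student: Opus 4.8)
This statement is a conjecture rather than a theorem, so what follows is a strategy for an open problem; I will indicate where each direction stands and why it resists the methods developed above. Write $G\iso F/R$ with $F$ free pro-$p$ on two generators $x,y$, so that $R=\<\rho_1,\rho_2\>$ with both relations in odd level $\geq 3$ (Theorem \ref{KV}) and Zassenhaus type in $\{(3,3),(3,5),(3,7)\}$. The one piece of genuinely arithmetic structure I would lean on is the involution $\sigma$ coming from complex conjugation: as in the remark after Theorem \ref{KV}, generators and relations may be taken in the $(-1)$-eigenspace of $\sigma$ on each $F_k/F_{k+1}$.

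The natural first move is to pass to the associated graded restricted Lie algebra $L=\mathrm{gr}(G)$ over $\F_p$ and bound its dimension, since $G$ is finite exactly when $L$ is finite-dimensional. Here one meets the central difficulty: the restricted structure produces a tower of $p$-th power classes $x^{[p^k]},y^{[p^k]}$ sitting in degrees $p^k$, and these are never reached by Lie brackets of lower-degree elements. Two relations at level $3$ can at best eliminate the bracket classes $[[x,y],x],[[x,y],y]$ spanning (the bracket part of) $\mathrm{gr}_3(F)$—and for $p=3$ the degree-$3$ part is larger, also containing $x^{[3]},y^{[3]}$—but in no case do two such relations by themselves annihilate the whole $p$-power tower. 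If the relations kill the brackets, the iterated powers survive in unbounded degree and $L$ is infinite. Thus type $(3,3)$ alone does \emph{not} force finiteness at the level of abstract pro-$p$-groups, and the real content of the conjecture is that the relations actually occurring for $p$-tower groups rewrite the $p$-power classes in terms of brackets—precisely the information the level data omits.

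Consequently I would not expect either implication to follow from level and symmetry data alone; both require injecting the arithmetic of $K$ to pin down the relations themselves. For ``type $(3,3)\Rightarrow$ finite'' the plan is to use the cohomological condition of this paper, or an explicit determination of $\rho_1,\rho_2$ from invariants of $K$, to show that the \emph{higher-degree tails} of the two relations force each $p$-th power $x^{[p]},y^{[p]}$ to be rewritten via brackets, collapsing $L$ into a finite nilpotent algebra. For the converse one must exclude types $(3,5)$ and $(3,7)$; since Golod-Shafarevich is vacuous here (all three Zassenhaus polynomials are positive on $(0,1)$), I would instead build on the lower bounds of \cite{Mc09}—which already force $|G|\geq p^{23}$ in the $(3,7)$ case—and try to show that deferring a relation to level $5$ or $7$ is incompatible with the simultaneous demands of $\sigma$-equivariance and finiteness, driving the order to infinity. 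The principal obstacle, and the reason the problem is open, is the ``unknown relations'' phenomenon noted in the text: unlike $G_S(p)$, whose local relations are given explicitly by power-residue symbols, here we possess no arithmetic formula for $\rho_1,\rho_2$, only their levels and conjugation behavior, so no finite computation currently closes either direction.
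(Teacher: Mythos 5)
This statement is a conjecture, and the paper offers no proof of it --- only the two bullet points of evidence preceding it (the absence of known $2$-generated $p$-tower groups outside type $(3,3)$, and the order bound from \cite{Mc09} for type $(3,7)$). You correctly decline to manufacture a proof, and your assessment of why both implications are out of reach is sound and consistent with the paper's framing. Your central observation --- that type $(3,3)$ cannot force finiteness for abstract pro-$p$-groups, so the conjecture is irreducibly arithmetic --- is correct and worth making explicit: the pro-$p$-group $\<x,y\mid [[x,y],x],[[x,y],y]\>$ is $2$-generated with two relations of level exactly $3$, yet it is the infinite Heisenberg group over $\Z_p$. (One small caveat: that particular example is disqualified from being a $p$-tower group for the cruder reason that its abelianization is infinite, whereas $G/[G,G]\iso\Cl_p(K)$ is finite; so the honest statement is that the level data plus $\sigma$-equivariance plus finite abelianization still do not obviously force finiteness, which is what the ``unknown relations'' discussion in the paper is pointing at.) Your remark that Golod-Shafarevich is vacuous for all three admissible Zassenhaus types, so that excluding $(3,5)$ and $(3,7)$ in the converse direction requires genuinely new input beyond Theorem \ref{fullgs}, likewise matches the state of affairs the paper describes. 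In short: there is no proof in the paper to compare against, your proposal contains no incorrect claims, and it accurately locates the obstruction.
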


While the two bullets above represent the scant, but slowly building,
body of evidence in support of this conjecture, it is worth mentioning
because of the elegant solution to the $p$-class field tower problem
one obtains as a corollary.  This will be presented in the next
section after we set up some terminology and notation.

\section{A Criterion for Infinite $p$-Towers}

Let $G$ be a $d$-generated pro-$p$-group, and take a minimal
presentation $1\ra R\ra F\ra G\ra 1$ for $G$.  The corresponding
inflation-restriction sequence in Galois cohomology gives isomorphisms
$\text{inf}\colon H^1(G,\F_p)\ra H^1(F,\F_p)$ and $\text{tg}\colon
H^1(R,\F_p)^G\ra H^2(G,\F_p)$, where $tg$ denotes the transgression
map.  For $\rho\in R$, we define the corresponding \emph{trace map}
$\tr_\rho:H^2(G,\F_p)\ra \F_p$ by
$\tr_\rho(\phi):=(\text{tg}^{-1}\phi)(\rho)$.  Let
$\chi_1,\ldots,\chi_d$ be a basis for $H^1(G,\F_p)$.  A frequently
used tool from Galois cohomology (\cite{NSW00}, 3.9.13) is that a
relation $\rho\in R\subset F$ defining $G$ can be computed modulo
$F_3$ from the scalars $\tr_\rho(\chi_i\cup\chi_j)$ (here, $\cup$
denotes the cup product).

\vspace*{.1in}

If $K$ is a quadratic imaginary number field with $d_p(\Cl(K))=2$ and
$G$ is the $p$-tower group over $K$, then as mentioned in the remark
after Theorem \ref{KV}, the cup product is trivial, rendering the
above technique rather unhelpful.  However, the vanishing of the cup
product is precisely the required condition for the so-called triple
Massey products
\begin{align*}
\<\cdot,\cdot,\cdot\>\colon H^1(G,\F_p)\times H^1(G,\F_p)\times H^1(G,\F_p)\ra H^2(G,\F_p)
\end{align*}
to be well-defined.  To define these, let $\{\chi_1,\chi_2\}$ be a
basis for $H^1(G,\F_p)$.  For each pair $(i,j)$ with $1\leq i,j\leq
2$, the triviality of the cup-product implies we can write
$\chi_i\cup\chi_j=d(f_{ij})$ for some 1-chain $f_{ij}:G\ra \F_p$.
Then for any $1\leq i,j,k\leq 2$, we can uniquely define the
\emph{triple Massey products} $\<\chi_i,\chi_j,\chi_k\>$ to be the
class in $H^2(G,\F_p)$ of the 2-cocycle $[\chi_i\cup f_{jk}+f_{ij}\cup
\chi_k]$.  In general, these are well-defined only up to the
collection of choices of $f_{ij}$ but the vanishing of the cup-product
for $p$-tower groups removes any such ambiguity.

\begin{Theo}\label{massey}
Let $K$ be a quadratic imaginary number field with $d_p\Cl(K)=2$, let
$G=\Gal(K_p^{(\infty)}/K)$, choose a basis $\{\chi_1,\chi_2\}$ for
$H^1(G,\F_p)$, and suppose $p>3$.  Then $\l_p(K)=\infty$ if the triple
Massey products $\<\chi_1,\chi_2,\chi_1\>$ and
$\<\chi_1,\chi_2,\chi_2\>$ both vanish.  For $p=3$, we need in
addition the triviality of the triple Massey products
$\<\chi_1,\chi_1,\chi_1\>$ and $\<\chi_2,\chi_2,\chi_2\>$.
\end{Theo}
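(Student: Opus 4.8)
The plan is to prove the contrapositive in the form of a statement about relation levels: I will show that the vanishing of the listed Massey products forces every relation of $G$ to lie in $F_4$, so that $r_3=0$. The preceding discussion shows that every finite $2$-generated $p$-tower group has Zassenhaus type $(3,3)$, $(3,5)$, or $(3,7)$, and each of these has a relation of level $3$, i.e.\ $r_3\geq 1$. Hence $r_3=0$ is incompatible with finiteness, and $\l_p(K)=\infty$. Since $d=d_p\Cl(K)=2$ gives $r=2$ by Theorem \ref{r=d}, and $R\subset F_3$ with $r_2=0$ by Theorem \ref{KV}, the whole problem reduces to detecting the image $\ol\rho\in F_3/F_4$ of each relation $\rho$.

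First I would pass to the associated graded object. By Lazard's theorem (Theorem \ref{lazard}), together with the standard (Jennings) description of the graded pieces of the Zassenhaus filtration of a free pro-$p$-group, $\mathrm{gr}(F)=\bigoplus_n F_n/F_{n+1}$ is the free restricted Lie algebra over $\F_p$ on the degree-$1$ images $x_1,x_2$ of the generators. The crucial input is the degree-$3$ part: the free ordinary Lie algebra on two generators has two-dimensional degree-$3$ component (Witt's formula gives $\tfrac13(2^3-2)=2$), spanned by $[[x_1,x_2],x_1]$ and $[[x_1,x_2],x_2]$. For $p>3$ the restriction map $\xi\mapsto\xi^{[p]}$ raises degree by $p>3$, so these two brackets already span $F_3/F_4$; for $p=3$ the restriction contributes the two further classes $x_1^{[3]}$ and $x_2^{[3]}$, making $F_3/F_4$ four-dimensional. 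This dichotomy is exactly the source of the extra hypotheses required when $p=3$.

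Next I would establish the duality between the cubic part of a relation and triple Massey products, in direct analogy with the cited fact (\cite{NSW00}, 3.9.13) that the cup-product traces $\tr_\rho(\chi_i\cup\chi_j)$ recover $\ol\rho$ modulo $F_3$. Since the cup product vanishes (the remark after Theorem \ref{KV}), the triple products are well-defined and unambiguous, and the claim to prove is that the pairing $\rho\mapsto\tr_\rho\langle\chi_i,\chi_j,\chi_k\rangle$ reads off the coefficient of $\ol\rho$ along the degree-$3$ basis vector indexed by $(i,j,k)$. Concretely, for $p>3$ one should get, up to a fixed nonzero scalar, $\ol\rho\equiv \tr_\rho\langle\chi_1,\chi_2,\chi_1\rangle\,[[x_1,x_2],x_1]+\tr_\rho\langle\chi_1,\chi_2,\chi_2\rangle\,[[x_1,x_2],x_2]\ (\mathrm{mod}\ F_4)$, while for $p=3$ the symmetric products $\langle\chi_i,\chi_i,\chi_i\rangle$ detect the Frobenius classes $x_i^{[3]}$. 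The antisymmetry and Jacobi-type relations satisfied by Massey products mirror the Lie relations in $\mathrm{gr}(F)$, which is what makes the two listed products (four, when $p=3$) a complete dual system and the remaining products redundant. Granting this, the conclusion is immediate: if the listed Massey products vanish as classes in $H^2(G,\F_p)$, then $\tr_\rho$ kills each of them for every $\rho\in R$, so $\ol\rho=0$ in $F_3/F_4$ for every relation, i.e.\ $R\subset F_4$ (hence $R\subset F_5$ by the odd-level statement of Theorem \ref{KV}); thus $r_3=0$ and $G$ is infinite.

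The main obstacle I anticipate is the rigorous proof of the Massey-product duality of the third paragraph: one must verify the precise identification of $\tr_\rho\langle\chi_i,\chi_j,\chi_k\rangle$ with the Lie coefficient of $\ol\rho$, which amounts to a triple-Massey analogue of the Magnus-expansion computation underlying the quadratic case, and then track the restricted structure carefully enough at $p=3$ to confirm that $\langle\chi_i,\chi_i,\chi_i\rangle$ genuinely pairs with $x_i^{[3]}$ rather than with a bracket term. Matching the indeterminacy of the Massey product (killed here by the vanishing cup product) against the choice of lift of $\ol\rho$ is where the bookkeeping is most delicate.
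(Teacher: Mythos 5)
Your proposal follows essentially the same route as the paper: reduce to showing the relations drop into $F_4$ (hence $F_5$ by Koch--Venkov), compute $\dim_{\F_p}F_3/F_4$ (which the paper cites from \cite{Mc08} rather than deriving via the Witt formula), invoke the Vogel-type duality between $\tr_\rho\<\chi_i,\chi_j,\chi_k\>$ and the degree-3 coefficients of $\ol{\rho}$ (which the paper likewise does not prove but cites from \cite{Vo04}), and finish with the Golod--Shafarevich inequality, your appeal to the $(3,3)/(3,5)/(3,7)$ classification being the same estimate the paper carries out by evaluating $2t^5-2t+1$ at $t=\frac{2}{3}$. The step you correctly flag as the main obstacle is exactly the one the paper outsources to Vogel's thesis, so your outline matches the published argument in both structure and level of detail.
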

\begin{proof}
By Theorem \ref{r=d} and the computations that $r_1=r_2=0$, the
hypotheses of the theorem guarantee the existence of a presentation
$1\ra R\ra F\ra G\ra 1$, where $F=\<x,y\>$ is the free pro-$p$-group
on two generators, and $R$ can be generated as a normal subgroup of
$F$ by two elements $\rho_1$ and $\rho_2$ of level at least 3.  In
\cite{Mc08}, the \emph{dimension factors} $\dim_{\F_p}F_n/F_{n+1}$ of a
free pro-$p$-group on $d$ generators are computed, and we find that
\begin{align*}
\dim_{\F_p}F_3/F_{4}=\begin{cases}
4&\text{ if }\,p=3\\
2&\text{ if }\,p\neq 3	       \end{cases}
\end{align*}
for $d=2$.  For $p\neq 3$, a basis for $F_3/F_4$ is given by the
triple commutators $[x,y,x]:=[[x,y],x]$ and $[x,y,y]:=[[x,y],y]$.  We
can thus write $\rho_i=[x,y,x]^{a_i}[x,y,y]^{b_i}\rho_i'$,
$i\in\{1,2\}$ for some $a_i,b_i\in \F_p$, $\rho'_i\in F_{4}$.  By
work of Vogel \cite{Vo04} extending the aforementioned link between
cup products and relation structures, we have
$2a_i=tr_{\rho_i}\<\chi_1,\chi_1,\chi_2\>$ and
$b_i=tr_{\rho_i}\<\chi_1,\chi_2,\chi_2\>$, where the $tr_{\rho_i}$ are
the trace maps defined at the start of the section.  In particular, if
both triple Massey products vanish, then $a_1=b_1=a_2=b_2=0$.  But
this implies $\rho_i=\rho_i'\in F_{4}$ for $i=1,2$, and by the
condition that $r_{2k}=0$, we must further have $\rho_i\in F_{5}$ for
$i=1,2$.  Let $j_1$ and $j_2$ denote the levels of the two relations,
so that $j_1,j_2\geq 5$ by the previous sentence.  If $G$ were finite,
then by Theorem \ref{fullgs}, we would have
\begin{align*}
0<Z_G(t)=t^{j_1}+t^{j_2}-2t+1\leq 2t^5-2t+1
\end{align*}
on the unit interval, which gives a contradiction when evaluated at
$t=\frac{2}{3}$.  The proof is almost identical for $p=3$, noting that
a basis for $F_3/F_4$ is given by $\{x^3,y^3,[x,y,x],[x,y,y]\}$ and
the exponents of $x^3$ and $y^3$ occurring in the representation of
$\rho_i$ modulo $F_4$ are respectively given by
$\tr_{\rho_i}\<\chi_1,\chi_1,\chi_1\>$ and
$\tr_{\rho_2}\<\chi_2,\chi_2,\chi_2\>$.
\end{proof}

Note that it is clear from the proof that all that is really needed is
the vanishing of the traces of the Massey products and not the Massey
products themselves (though, in practice, one could not evaluate these
without knowing more about the relations, leading to a vicious circle
of ignorance).  In short, the proof observes that the vanishing of
these traces of Massey products forces the relations down into the
fourth level of the Zassenhaus filtration, which when combined with
the Koch-Venkov result that $r_4=0$, pushes them down into at least
the fifth level.  One needs then only observe that by the theorem of
Golod and Shafarevich, two relations in the fifth level of the
Zassenhaus filtration is insufficient to keep a 2-generated
pro-$p$-group finite.

\vspace*{.1in}

On a similar note, if one assumes the (3,3) conjecture given in the
last section, then to prove the group infinite it suffices to force
only one of the two relations down to the fourth level, and we are
left with the following particularly elegant cohomological solution to
the $p$-class field tower problem over quadratic imaginary number
fields.

\begin{Prop}[Assuming the $(3,3)$ Conjecture]
Let $K$ be a quadratic imaginary number field with $d_p\Cl(K)=2$, let
$G=\Gal(K_p^{(\infty)}/K)$, choose a basis $\{\chi_1,\chi_2\}$ for
$H^1(G,\F_p)$, and suppose $p>3$.  Then $\l_p(K)<\infty$ if and only
if the matrix
\begin{align*}
\begin{bmatrix}
\tr_{\rho_1}\<\chi_1,\chi_1,\chi_2\>&\tr_{\rho_1}\<\chi_2,\chi_2,\chi_1\>\\
\tr_{\rho_2}\<\chi_1,\chi_1,\chi_2\>&\tr_{\rho_2}\<\chi_2,\chi_2,\chi_1\>
\end{bmatrix}
\end{align*}
is invertible over $\F_p$.
\end{Prop}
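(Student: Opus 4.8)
The plan is to split the statement into an unconditional linear-algebra equivalence — that the displayed matrix is invertible precisely when $G$ has Zassenhaus type $(3,3)$ — and then to invoke the $(3,3)$ Conjecture only to trade ``type $(3,3)$'' for ``finite''. First I would reproduce the setup of Theorem \ref{massey}: write $G\iso F/R$ with $F=\<x,y\>$ free pro-$p$ and $R$ normally generated by two relations $\rho_1,\rho_2$ of level at least $3$. Since $p>3$ we have $\dim_{\F_p}F_3/F_4=2$ with basis $\{[x,y,x],[x,y,y]\}$, so I record each relation by its level-$3$ part, writing $\rho_i\equiv[x,y,x]^{a_i}[x,y,y]^{b_i}\pmod{F_4}$ and assembling the coordinate matrix $A=\begin{bmatrix}a_1&b_1\\a_2&b_2\end{bmatrix}$.

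Next I would show that, under the conjecture, $\l_p(K)<\infty$ is equivalent to $A$ being invertible. Assuming the conjecture, finiteness of $G$ is the same as $G$ having type $(3,3)$, i.e. $r_3=2$. Because $[F_3,F]\subseteq F_4$, the subquotient $F_3/F_4$ is central in $F/F_4$, so every $F$-conjugate of $\rho_i$ agrees with $\rho_i$ modulo $F_4$; hence the images $\bar\rho_1,\bar\rho_2$ span the image $\bar R$ of $R$ in $F_3/F_4$ and $r_3=\dim_{\F_p}\bar R$. Thus $r_3=2$ exactly when $\bar\rho_1,\bar\rho_2$ are linearly independent, i.e. when $A$ is invertible. (By the analysis preceding the conjecture, a finite $G$ already has at least one relation at level $3$; the genuine role of the conjecture is only to rule out the a priori types $(3,5)$ and $(3,7)$.)

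The remaining step is to match $A$, up to invertible column scaling, with the matrix in the statement. Vogel's computation quoted in the proof of Theorem \ref{massey} gives $\tr_{\rho_i}\<\chi_1,\chi_1,\chi_2\>=2a_i$, so the first column of the displayed matrix is $2$ times the first column of $A$, and $2$ is a unit since $p$ is odd. For the second column I would run the same computation with $x$ and $y$ (equivalently $\chi_1$ and $\chi_2$) interchanged, and then transport back along the congruences $[y,x,y]\equiv-[x,y,y]$ and $[y,x,x]\equiv-[x,y,x]$ in $F_3/F_4$; this exhibits $\tr_{\rho_i}\<\chi_2,\chi_2,\chi_1\>$ as a single nonzero $\F_p$-multiple of $b_i$. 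Scaling each column of $A$ by a unit leaves invertibility unchanged, so the displayed matrix is invertible iff $A$ is, completing the chain $\l_p(K)<\infty\iff$ type $(3,3)\iff A$ invertible $\iff$ displayed matrix invertible.

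The main obstacle is the bookkeeping in this last step: I must confirm that $\tr_{\rho_i}\<\chi_2,\chi_2,\chi_1\>$ is a nonzero multiple of $b_i$ rather than of $a_i$, since otherwise the two columns would be proportional and the criterion would be vacuous. This rests on the symmetry of the triple Massey product under reversing its arguments — equivalently the $x\leftrightarrow y$ symmetry of Vogel's formula — together with careful tracking of the sign coming from $[y,x]=[x,y]^{-1}$. The exact sign and the factor of $2$ are harmless, since only the non-vanishing of the determinant is at issue; once the dictionary of Theorem \ref{massey} between relations modulo $F_4$ and traces of triple Massey products is in hand, everything else is formal.
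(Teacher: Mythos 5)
Your proposal is correct and follows exactly the argument the paper intends (the paper states this Proposition without a formal proof, relying on the preceding remark that under the $(3,3)$ Conjecture one need only detect whether both relations genuinely occupy level $3$): conjecture reduces finiteness to type $(3,3)$, centrality of $F_3/F_4$ in $F/F_4$ identifies $r_3$ with the rank of the coefficient matrix of $\bar\rho_1,\bar\rho_2$ in $F_3/F_4$, and Vogel's dictionary converts that matrix into the displayed one up to unit column scalings. Your care in checking that $\tr_{\rho_i}\<\chi_2,\chi_2,\chi_1\>$ is a unit multiple of $b_i$ (via the $x\leftrightarrow y$ symmetry and $[y,x,y]\equiv[x,y,y]^{-1}\bmod F_4$) rather than of $a_i$ is exactly the bookkeeping the paper leaves implicit, and it is handled correctly.
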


Finally, we observe that the $(3,3)$ conjecture would also provide an
algorithm for determining whether or not a given quadratic imaginary
number field $K$ has a finite $p$-class field tower.  Namely, one
first computes the $p$-rank $d_p\Cl(K)$.  If this $p$-rank is 0 or 1,
the $p$-tower is finite, and if the $p$-rank is 3 or larger, the
$p$-tower is infinite.  The only remaining case is that $d=2$.  Note
that $G''\subset \gamma_4(G)\subset G_4$ and so we have a surjection
$\Gal(K_p^{(2)}/K)=G/G''\ra G/G_4$.  Computing the Galois structure of
the first two steps of the $p$-class field tower over $K$ thus allows
the computation of $G/G_4$. In \cite{Mc08}, the author finds that the
quantity $\dim_{\F_p}G_3/G_4$ distinguishes finite $p$-groups of
Zassenhaus type $(3,3)$ from finite $p$-groups of Zassenhaus type
$(3,5)$ or $(3,7)$ for any value of $p$.  Specifically, we have the
following table of values of $\dim_{\F_p}G_3/G_4$ depending on the
prime $p$ and the Zassenhaus type of $G$:
\begin{align*}
\begin{tabular}{c|c|c}
&$(3,3)$&$(3,5)$ or $(3,7)$\\\hline
$p=3$&2&3\\
$p\geq 5$&0&1\\
\end{tabular}
\end{align*}
Thus under the assumption of the (3,3) conjecture, the decision
process concludes with the computation of the quantity
$\dim_{\F_p}G_3/G_4$.  One has $\l_p(K)<\infty$ if the result is even,
and $\l_p(K)=\infty$ if the result is odd.

\vspace*{.1in}

\noindent 
The author would like to thank in particular William McCallum, John
Labute, and Denis Vogel for many useful conversations, the referee for
several improvements, and the organizers and attendees of the
conference for the invitation and feedback.

\end{document}